\newcommand{\Z}{\mathbb{Z}}
\newcommand{\R}{\mathbb{R}}
\makeatletter\@ifundefined{coloneqq}{}\makeatother
\newtheorem{thm}{Theorem}[section]
\newtheorem{lem}[thm]{Lemma}
\newtheorem{prop}[thm]{Proposition}
\theoremstyle{definition}
\newtheorem{defn}[thm]{Definition}
\newtheorem{ex}[thm]{Example}
\newtheorem{rem}[thm]{Remark}
\newtheorem{notn}[thm]{Notation}
\newtheorem{algor}[thm]{Algorithm}
\date{}
\title{On a move reducing the genus of a knot diagram}
\author{Kenji Daikoku}
\author{Keiichi Sakai}
\address{Department of Mathematical Sciences, Faculty of Science, Shinshu University, Matsumoto, 390-8621 Japan.}
\email{10sa109j@shinshu-u.ac.jp}
\email{ksakai@math.shinshu-u.ac.jp}
\author{Masamichi Takase}
\address{Faculty of Science and Technology,
Seikei University, 3-3-1 Kichijoji-kitamachi, Musashino, Tokyo 180-8633, Japan.}
\email{takase@math.shinshu-u.ac.jp}
\keywords{knot, canonical genus, bridge-replacing move, knotoid, virtual knot, Gauss diagram, Gauss code}
\subjclass[2000]{57M25; 68R15}
\begin{document}\sloppy
\begin{abstract}
For a knot diagram we introduce an operation 
which does not increase 
the genus of the diagram and does not change its 
representing knot type. 
We also describe a condition for this operation to 
certainly decrease the genus. 
The proof involves the study 
of a relation between 
the genus of a virtual knot diagram 
and the genus of a knotoid diagram, 
the former of which has been introduced by Stoimenow, Tchernov and Vdovina, and 
the latter by Turaev recently. 
Our operation has a simple interpretation in terms of Gauss codes and hence can easily be computer-implemented. 
\end{abstract}
\maketitle
\section{Introduction}
Seifert \cite{seifert} gave an algorithm to 
construct from a diagram $D$ of a knot $K$ 
an orientable surface 
bounded by the knot (see Definition~\ref{def:Seifert_algorithm}). 
We call the surface constructed by Seifert's algorithm 
\textit{the canonical Seifert surface for $D$}
and its genus \textit{the genus $g(D)$ of $D$}. 
The canonical genus $g_c(K)$ of a knot $K$ is defined to be the 
minimal genus of all possible diagrams. 
It is an important knot invariant and 
extensively studied (see for example \cite{stoimenow}). 
By definition the canonical genus of a knot $K$ gives 
an upper bound for the genus $g(K)$ of $K$, 
that is the minimum of genera of 
all possible Seifert surfaces for $K$. 

In this paper, 
we introduce an operation, called {\em the bridge-replacing move}, 
for a knot diagram  
which does not change its representing knot type and 
does not increase the genus of the diagram 
(see Definition~\ref{defn:daikoku_move} and Theorem~\ref{thm:main}). 
A necessary and sufficient condition for the operation 
to actually decrease the genus of the diagram 
is given in Proposition~\ref{prop:bypass}.

Our move is derived from Turaev's idea 
using the notion {\em knotoid} \cite{turaev}. 
{\em A knotoid diagram} $D^{\circ}$ is a diagram which differs 
from usual knot diagrams in that the underlying curve is an immersed 
interval rather than an immersed circle (see Definition~\ref{def:knotoid}). 
Turaev has constructed in \cite[\S2.5]{turaev} 
\textit{the canonical surface} 
for a given knotoid diagram by using an analogous procedure 
to the usual Seifert algorithm 
(see Definition~\ref{defn:surface_knotoid}).
{\em The genus of a knotoid diagram} is defined to be the genus 
of its canonical surface. 
In \cite[\S2.5]{turaev} it has been suggested that 
the study of knotoid diagrams can be used 
to obtain a good estimate for the genus of a knot. 
For, if we have a diagram $D$ of a non-alternating knot $K$ 
which contains a consecutive sequence $B$ of 
$k$ over- or under-crossing segments ($k\ge2$), 
then by removing $B$ from $D$ we can obtain the knotoid $D_B^\circ$
with genus $g(D_B^{\circ})\ge g(K)$.
In this context, we will prove in Proposition~\ref{prop:ineq}
that the inequality 
$g(D)\ge g(D^{\circ}_B)$ always holds,
which has not been explicitly proven in \cite[\S2.5]{turaev}. 
Moreover, the bridge-replacing move for $(D,B)$ 
(see Definition~\ref{defn:daikoku_move}) proves to 
produce the knot diagram $\tilde{D}_B$ with the property 
$g(\tilde{D}_B)=g(D^{\circ}_B)$.
This implies $g(D)\ge g(\tilde{D}_B)$, that is, 
the bridge-replacing move is indeed useful to obtain 
a better estimate for $g_c(K)$ 
than the one given by the initial diagram $D$.

For the proof, we will study in \S\ref{sect:main} a relation 
between the canonical surface associated to a knotoid diagram 
and the surface associated to a certain virtual knot diagram, 
which has been introduced by Stoimenow, Tchernov and Vdovina 
\cite{STV} through {\em Gauss diagrams}.

In \S\ref{s:ex}, we discuss an example which suggests that 
the bridge-replacing move is actually useful for the 
determination of the genus of a knot.

\section{Preliminaries}
\subsection{Knots}
Here we recall some well-known notions concerning 
classical knots in $\R^3$.

A {\em Seifert surface} for a knot $K$ is a connected, 
oriented surface $\Sigma$ embedded in $\R^3$ whose 
boundary $\partial\Sigma$ coincides with the knot $K$.
Recall {\em Seifert's algorithm} which produces a 
Seifert surface $\Sigma_D$ for a knot $K$ from a given 
diagram $D=D_K$ of $K$:

\begin{defn}[Seifert's algorithm \cite{seifert}]\label{def:Seifert_algorithm}
Given a diagram $D=D_K$ of a knot $K$, the 
{\em canonical Seifert surface} $\Sigma_D$ for $K$ is 
the surface obtained in the following way.
\begin{enumerate}[(i)]
\item Draw $D$ in $\R^2\times\{ 0\}$ and orient $D$ 
 in an arbitrary way.
\item Smooth all the crossings of $D$ as in 
 Figure~\ref{fig:smoothing} to obtain a disjoint union of 
 embedded circles in the plane (called {\em Seifert circles}).
\item Fill the Seifert circles with the disjoint disks 
 in $\R^3$.
\item Take the half-twisted band sums along the original 
 crossings (Figure~\ref{fig:band_sum}) of $D$.
\end{enumerate}
\end{defn}

\begin{figure}
\includegraphics{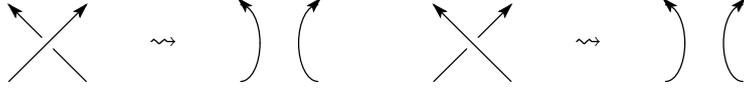}
\caption{Smoothing of the crossings}
\label{fig:smoothing}
\end{figure}

\begin{figure}
\includegraphics{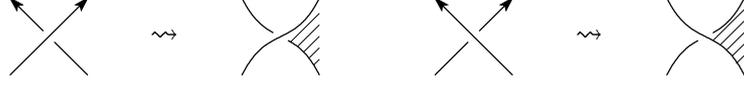}
\caption{Half-twisted band sum}
\label{fig:band_sum}
\end{figure}

The following is obvious.

\begin{lem}\label{lem:canonical_genus_knot_diagram}
Given a knot diagram $D=D_K$ with $n$ crossings, 
the genus of the canonical Seifert surface $\Sigma_D$ of 
$D$ is given by the formula
\begin{equation}\label{eq:canonical_genus_knot_diagram}
 g(\Sigma_D)=\frac{n-s_D+1}{2},
\end{equation}
where $s_D$ is the number of the Seifert circles obtained 
in the step (ii) in Definition~\ref{def:Seifert_algorithm}.
\end{lem}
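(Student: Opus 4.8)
The plan is to compute the Euler characteristic of $\Sigma_D$ directly from its construction in Definition~\ref{def:Seifert_algorithm} and then read off the genus from the classification of compact orientable surfaces with boundary.

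First I would observe that $\Sigma_D$ is assembled from exactly $s_D$ disks (filling the Seifert circles in step (iii)) together with $n$ half-twisted bands (attached along the original crossings in step (iv)). Regarding $\Sigma_D$ as the result of attaching $n$ one-handles, namely the bands, to the disjoint union of $s_D$ two-disks, and using that each disk has Euler characteristic $1$ while each one-handle attachment lowers the Euler characteristic by $1$ (a band $D^1\times D^1$ contributes $\chi=1$ but is glued along two arcs with $\chi=2$), I obtain
\begin{equation*}
\chi(\Sigma_D)=s_D-n.
\end{equation*}

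Next I would invoke the three structural facts guaranteed by Seifert's algorithm. The surface $\Sigma_D$ is connected, since the diagram $D$ of the knot $K$ is connected; it is orientable, because the chosen orientation of $D$ in step (i) orients the Seifert circles coherently so that the bands are attached compatibly; and its boundary is a single circle, namely $K$ itself. For a connected orientable surface of genus $g$ with $b$ boundary components one has $\chi=2-2g-b$, so substituting $b=1$ gives $\chi(\Sigma_D)=1-2g(\Sigma_D)$. Equating this with $s_D-n$ and solving for $g(\Sigma_D)$ yields the asserted formula $g(\Sigma_D)=(n-s_D+1)/2$.

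The computation itself is routine once the handle decomposition is in place; the only point requiring genuine care is the verification of the three structural facts, in particular that $\partial\Sigma_D$ has exactly one component. This is where the connectivity of $D$ is essential: it guarantees both that $\Sigma_D$ is connected and that attaching the bands does not create extra boundary circles, so that $b=1$ holds. I expect this bookkeeping to be the main, though minor, obstacle, the remaining steps being immediate.
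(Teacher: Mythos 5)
Your proof is correct and is precisely the argument the authors intend: the paper states this lemma without proof (``The following is obvious''), and the same Euler-characteristic bookkeeping $\chi(\Sigma)=(\text{number of disks})-(\text{number of bands})$ combined with $\chi=1-2g$ for a connected orientable surface with one boundary component is exactly what they use later in Lemma~\ref{lem:canonical_genus_Gauss_diagram} and Proposition~\ref{prop:eq}. Your identification of the one nontrivial point --- that $\partial\Sigma_D$ is a single circle, which holds because the boundary is $K$ itself by construction --- is also the right place to put the care.
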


According to Stoimenow, Tchernov and Vdovina \cite{STV}, 
for the {\em Gauss diagram} of a knot diagram $D$ we can 
construct a surface, which turns out to be homeomorphic to 
the canonical Seifert surface $\Sigma_D$, as explained below.

\begin{defn}\label{def:Gauss_diagram}
A {\em Gauss diagram} is an oriented circle equipped 
with some number $n$ of signed, oriented chords each of 
which connects distinct two points on the circle 
(all the $2n$ points are distinct with each other).
\end{defn}

To each knot diagram $D=D_K$ with $n$ crossings, we can 
assign the Gauss diagram $G_D$ with $n$ chords as follows:
\begin{enumerate}[(i)]
\item Connect the preimages of each crossing of $D$ by a chord.
\item Choose the orientation of each chord from the overpass 
 branch to the underpass one.
\item Give to each chord the sign $+$ or $-$ depending on 
 whether the corresponding crossing is positive (the left 
 crossing in Figure~\ref{fig:smoothing}) or negative, respectively.
\end{enumerate}

\begin{ex}\label{ex:trefoil_Gauss_diagram}
Figure~\ref{fig:trefoil} shows a knot diagram $D$ of the 
trefoil knot $3_1$ and its Gauss diagram.
The endpoint $\bar{i}$ of $G$ corresponds to the over-arc 
of the crossing $i$ of $D$.
\begin{figure}
\includegraphics{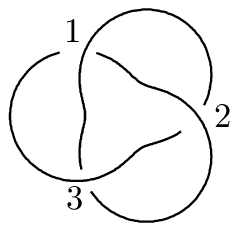}\qquad
\includegraphics{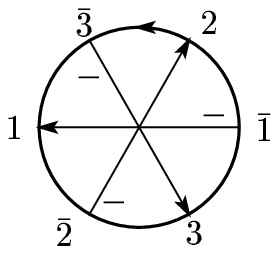}
\caption{A knot diagram $D$ of $3_1$ and its Gauss diagram $G$}
\label{fig:trefoil}
\end{figure}
\end{ex}

\begin{defn}\label{def:canonical_surface_virtual}
Let $G$ be a Gauss diagram.
The {\em canonical surface} $\Sigma_G$ of $G$ is the surface 
(with exactly one boundary component) obtained in the following way.
\begin{enumerate}[(i)]
\item Consider an annulus $A=S^1\times [0,1]$.
\item Take a band $B_c=[0,1]\times [0,1]$ for each chord $c$ of $G$.
\item For each chord $c$ and $\epsilon\in\{ 0,1\}$, glue 
 $\{\epsilon\}\times [0,1]\subset B_c$ to 
 $[x_{\epsilon}-\delta ,x_{\epsilon}+\delta ]\times\{0\}\subset \partial A$ 
 (where $x_0,x_1\in S^1=\R /\Z$ are the endpoints of $c$, 
 and $\delta >0$ is a sufficiently small number) so that 
 $\Sigma'_G:=A\cup \bigl(\bigcup_cB_c\bigr)$ is oriented.
\item Glue all the boundary components of $\Sigma'_G$ but 
 $S^1\times\{1\}\subset\partial A$ with disks to obtain an 
 oriented surface $\Sigma_G$ with one boundary component.
\end{enumerate}
\end{defn}

\begin{ex}
Figure~\ref{fig:trefoil_surface} shows $\Sigma'_G$ for the 
Gauss diagram $G$ of Figure~\ref{fig:trefoil}.
\begin{figure}
\includegraphics{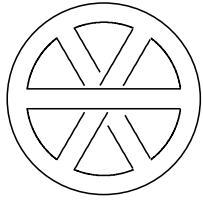}
\caption{$\Sigma'_G$ for the Gauss diagram $G$ of Figure~\ref{fig:trefoil}}
\label{fig:trefoil_surface}
\end{figure}
Attaching two disks along the boundary components of 
$\Sigma'_G$ other than $S^1\times\{ 1\}$ (the outermost circle), 
we obtain the canonical surface $\Sigma_G$ of $G$, which is 
homeomorphic to the canonical Seifert surface of $D$ shown in 
Figure~\ref{fig:trefoil} (as proven in Proposition~\ref{prop:surfaces_homeo}).
\end{ex}

\begin{rem}
The canonical surface for a Gauss diagram constructed by 
Stoimenow, Tchernov and Vdovina in \cite{STV} is a closed surface.
Removing a disk (attached along $S^1\times\{1\}$) from 
their surface, we obtain the surface defined in 
Definition~\ref{def:canonical_surface_virtual}.
\end{rem}

\begin{prop}[{\cite[Theorem~2.5]{STV}}]\label{prop:surfaces_homeo}
For a knot diagram $D$, the surfaces $\Sigma_D$ and 
$\Sigma_{G_D}$ are homeomorphic to each other.
\end{prop}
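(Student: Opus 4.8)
The plan is to exploit the classification of compact surfaces: a compact, connected, orientable surface with nonempty boundary is determined up to homeomorphism by its genus together with the number of its boundary components. Both of the surfaces in question are compact, connected and orientable. The surface $\Sigma_{G_D}$ has exactly one boundary component by construction (Definition~\ref{def:canonical_surface_virtual}(iv)), while $\partial\Sigma_D=K$ is a single circle; hence both have exactly one boundary component. It therefore suffices to show that they have equal genus, or equivalently equal Euler characteristic $\chi$, since for a one-boundary surface $\chi=1-2g$.

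Computing $\chi(\Sigma_D)$ is immediate from Seifert's algorithm: $\Sigma_D$ is assembled from $s_D$ disks (one per Seifert circle) joined by $n$ bands (one per crossing), each band being glued along two arcs, so $\chi(\Sigma_D)=s_D-n$. (Equivalently, this is just Lemma~\ref{lem:canonical_genus_knot_diagram} rewritten via $\chi=1-2g$.) For $\Sigma_{G_D}$ I would first compute $\chi(\Sigma'_G)$ by inclusion--exclusion: the annulus $A$ has $\chi(A)=0$, each of the $n$ bands is a disk glued to $A$ along two arcs, so $\chi(\Sigma'_G)=0+n\cdot 1-n\cdot 2=-n$. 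If $\Sigma'_G$ has $r$ boundary components, then capping all but $S^1\times\{1\}$ with $r-1$ disks raises $\chi$ by $r-1$, giving $\chi(\Sigma_{G_D})=-n+(r-1)$.

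Matching $\chi(\Sigma_{G_D})=-n+(r-1)$ with $\chi(\Sigma_D)=s_D-n$ shows that the entire statement reduces to the single combinatorial identity $r=s_D+1$; that is, the boundary components of $\Sigma'_G$ other than the outermost circle $S^1\times\{1\}$ must be in bijection with the Seifert circles of $D$. (This is consistent with the trefoil example, where $s_D=2$ and exactly two disks are attached.) Establishing this bijection is the heart of the argument, and is the step I expect to be the main obstacle.

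To prove $r=s_D+1$ I would trace $\partial\Sigma'_G$ explicitly. Apart from $S^1\times\{1\}$, the boundary consists of closed curves that alternate between the free arcs of $S^1\times\{0\}$ lying between consecutive band feet and the free long edges of the bands $B_c$. The orientability requirement imposed in Definition~\ref{def:canonical_surface_virtual}(iii) forces each band to connect its boundary arcs in the unique orientation-preserving fashion, and this is exactly the oriented (Seifert) smoothing at the corresponding crossing. Consequently, following such a boundary curve along $S^1\times\{0\}$ and across the band edges reproduces precisely the procedure of tracing a Seifert circle through the smoothed crossings, which yields the desired bijection. The delicate point is the orientation bookkeeping at each band: one must verify that orientability singles out the Seifert smoothing rather than the other resolution, and it is here that the chord orientation and sign conventions defining $G_D$ must be invoked.
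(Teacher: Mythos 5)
Your proposal is correct and follows essentially the same route as the paper: both arguments note that each surface has exactly one boundary component, reduce the claim to an Euler characteristic (equivalently, genus) count via the disk-and-band decompositions, and rest on the key combinatorial fact that the inner boundary components of $\Sigma'_{G_D}$ (the \emph{cycles} of $G_D$ in the paper's terminology) are in bijection with the Seifert circles of $D$. The paper simply asserts this bijection as ``easy to observe,'' whereas you rightly flag it as the heart of the matter and sketch the boundary-tracing verification; note only that the cycle structure of $G_D$ does not actually depend on the chord signs, so the sign conventions need not be invoked there.
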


For the proof, we introduce the notion of a 
{\em cycle in a Gauss diagram} which corresponds to a 
Seifert circle in Seifert's algorithm.

\begin{defn}\label{def:cycle_in_Gauss_diagram}
Let $G$ be a Gauss diagram. 
A {\em cycle} in $G$ is the cycle obtained by repeating the 
following steps: 
starting from some endpoint $x$ of a chord $c$,
\begin{enumerate}[(i)]
\item go to the other endpoint of $c$ along $c$,
\item go to the next endpoint $y$ along the circle with respect to 
 its orientation,
\item from $y$, repeat (i) and (ii) above until coming back to $x$ 
 in the step (ii).
\end{enumerate}
We will denote a cycle as a cyclic sequence 
$\{ i_1,i_2,\dots ,i_{2k}\}$ of endpoints which appear 
in the above steps (i) and (ii).
\end{defn}

\begin{ex}\label{ex:cycle_Gauss_diagram}
The Gauss diagram $G$ of Figure~\ref{fig:trefoil} has two cycles
$\{\bar{1},1,\bar{2},2,\bar{3},3\}$ and
$\{1,\bar{1},2,\bar{2},3,\bar{3}\}$.
\end{ex}

\begin{lem}\label{lem:canonical_genus_Gauss_diagram}
Given a Gauss diagram $G$ with $n$ chords, the genus of the 
canonical surface $\Sigma_G$ of $G$ is given by the formula
\begin{equation}\label{eq:canonical_genus_Gauss_diagram}
 g(\Sigma_G)=\frac{n-s_G+1}{2},
\end{equation}
where $s_G$ is the number of the cycles in $G$.
\end{lem}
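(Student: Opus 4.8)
The plan is to compute the genus of $\Sigma_G$ through its Euler characteristic, so I first track how each step of the construction in Definition~\ref{def:canonical_surface_virtual} affects $\chi$. The annulus $A=S^1\times[0,1]$ has $\chi(A)=0$. Each band $B_c$ is glued to $\partial A$ along two arcs (near the two endpoints of the chord $c$), i.e.\ as a $1$-handle, which lowers the Euler characteristic by $1$. Hence after attaching all $n$ bands, $\chi(\Sigma'_G)=0-n=-n$, where $\Sigma'_G=A\cup\bigl(\bigcup_c B_c\bigr)$.

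The crucial step is to count the boundary components of $\Sigma'_G$. One of them is the outer circle $S^1\times\{1\}\subset\partial A$, which is untouched by the band attachments. I claim the remaining boundary components are in bijection with the cycles of $G$ in the sense of Definition~\ref{def:cycle_in_Gauss_diagram}. To see this I would trace a boundary arc starting on $S^1\times\{0\}$: running along the inner circle one reaches the attaching region of some band $B_c$, then the boundary follows a side of $B_c$ across to the attaching region at the \emph{other} endpoint of $c$ --- this is precisely step (i) of the cycle definition (``go to the other endpoint of $c$ along $c$'') --- and then continues along the inner circle to the next band-attachment, which is step (ii) (``go to the next endpoint along the circle''). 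Thus each closed boundary curve of $\Sigma'_G$ meeting $S^1\times\{0\}$ realizes exactly one cycle of $G$, and conversely. Consequently $\Sigma'_G$ has $s_G+1$ boundary components in total.

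Capping off then gives the conclusion. Passing from $\Sigma'_G$ to $\Sigma_G$ amounts to gluing a disk along each of the $s_G$ inner boundary curves, each gluing raising the Euler characteristic by $1$, so $\chi(\Sigma_G)=-n+s_G$. The surface $\Sigma_G$ is connected (built from the connected annulus by attaching bands and disks) and orientable with exactly one boundary component $S^1\times\{1\}$, so its genus $g=g(\Sigma_G)$ satisfies $\chi(\Sigma_G)=1-2g$. Equating the two expressions for $\chi(\Sigma_G)$,
\begin{equation*}
 1-2g(\Sigma_G)=s_G-n,
\end{equation*}
and solving yields $g(\Sigma_G)=(n-s_G+1)/2$, as desired.

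I expect the boundary-counting step to be the only real obstacle: it is the point where the purely combinatorial notion of a cycle must be matched with the topological boundary of $\Sigma'_G$. The bookkeeping of Euler characteristics is routine once the orientability guaranteed by step (iii) of Definition~\ref{def:canonical_surface_virtual} is taken for granted, but the bijection between cycles and inner boundary components should be verified carefully, for instance against the trefoil of Example~\ref{ex:cycle_Gauss_diagram}, where the two cycles indeed produce the two inner boundary circles visible in $\Sigma'_G$.
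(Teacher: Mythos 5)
Your proof is correct and follows essentially the same route as the paper: the key point in both is that the cycles of $G$ are in bijection with the boundary components of $\Sigma'_G$ other than $S^1\times\{1\}$, so exactly $s_G$ disks are capped off in step (iv). The paper states only this correspondence and leaves the Euler characteristic bookkeeping implicit, whereas you carry it out explicitly; there is no substantive difference.
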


\begin{proof}
This formula is a consequence of the following facts:
a cycle corresponds to a boundary component of $\Sigma'_G$ 
to which we glue the boundary of a disk in the final step 
(iv) in Definition~\ref{def:canonical_surface_virtual}.
Thus the number of disks we glue in the step (iv) in 
Definition~\ref{def:canonical_surface_virtual} is equal 
to the number of cycles in $G$.
\end{proof}

\begin{proof}[Proof of Proposition~\ref{prop:surfaces_homeo}]
It is easy to observe that
\begin{itemize}
\item the number of the crossings of $D$ and that of the 
chords in $G_D$ are the same,
\item $s_D=s_{G_D}$ (under the notations in 
Lemmas~\ref{lem:canonical_genus_knot_diagram}, 
\ref{lem:canonical_genus_Gauss_diagram}).
\end{itemize}
These facts and formulas \eqref{eq:canonical_genus_knot_diagram}, 
\eqref{eq:canonical_genus_Gauss_diagram} imply that 
the genus of $\Sigma_D$ is equal to that of $\Sigma_{G_D}$.
This completes the proof, since both $\Sigma_D$ and 
$\Sigma_{G_D}$ have exactly one boundary component.
\end{proof}

\begin{rem}\label{rem:how_to_reduce_genus}
The formula \eqref{eq:canonical_genus_knot_diagram} 
suggests that, to construct a knot diagram of smaller genus 
from a given knot diagram, we need an operation which 
decreases the number of the crossings or increases the 
number of Seifert circles.
The formula \eqref{eq:canonical_genus_Gauss_diagram} gives 
a similar suggestion for Gauss diagrams.
\end{rem}

\subsection{Virtual knots and the Gauss diagrams}\label{sect:virtual}
\begin{defn}\label{def:virtual_knot_diagram}
A {\em virtual knot diagram} is a generic immersion 
$S^1\looparrowright\R^2$ with only transverse double points 
as its singularities, some of which are endowed with 
over- or under-crossing data but others are not 
(see Figure~\ref{fig:8_20_and_8_20oid}).
A crossing endowed with over- or under-crossing information 
is called a {\em real crossing}, and one without such 
information is called a {\em virtual crossing}.
\end{defn}

Seifert's algorithm cannot be applied to a virtual 
knot diagram as it is.
But the Gauss diagram $G_{D^{\bullet}}$ of a virtual 
knot diagram $D^{\bullet}$ can still be defined in the 
same way as explained after 
Definition~\ref{def:Gauss_diagram}, except that no 
chords are assigned to virtual crossings (see \cite[\S2.1]{STV}). 
Therefore in view of Proposition~\ref{prop:surfaces_homeo}, 
the following definition would be natural.

\begin{defn}[\cite{STV}]
Let $D^{\bullet}$ be a virtual knot diagram.
The {\em canonical surface} $\Sigma_{D^{\bullet}}$ 
of $D^{\bullet}$ is defined to be the canonical surface 
$\Sigma_{G_{D^{\bullet}}}$ of the Gauss diagram 
$G_{D^{\bullet}}$ of $D^{\bullet}$. 
The {\em genus} $g(D^{\bullet})$ of $D^{\bullet}$ is 
defined to be the genus of the canonical surface 
$\Sigma_{D^{\bullet}}$ of $D^{\bullet}$.
\end{defn}

\subsection{Turaev's Knotoids}
\begin{defn}[\cite{turaev}]\label{def:knotoid}
A {\em knotoid diagram} is a generic immersion 
$f:[0,1]\looparrowright\R^2$ which has only transverse 
double points (endowed with over- or under-crossing data) 
as its singularities.
The endpoints $f(0)$ and $f(1)$ are called respectively 
the {\em leg} and the {\em head} of the knotoid diagram.
\end{defn}

See the central diagram in Figure~\ref{fig:8_20_and_8_20oid} 
for an example of a knotoid diagram.
In the diagram, $P$ is the leg and $Q$ is the head.
In \cite{turaev} a surface is produced from a knotoid 
diagram $D^{\circ}$, in an analogous way to Seifert 
algorithm for knot diagrams.

\begin{defn}[\cite{turaev}]\label{defn:surface_knotoid}
The {\em canonical surface $\Sigma_{D^{\circ}}$ of a 
knotoid diagram $D^{\circ}$} is the surface obtained as follows.
First, draw $D^{\circ}$ in $\R^2\times\{0\}\subset\R^3$.
Then:
\begin{enumerate}[(i)]
\item Orient $D^{\circ}$ in an arbitrary way.
\item Smooth all the crossings of $D^{\circ}$ as in 
 Figure~\ref{fig:smoothing} to obtain a disjoint union 
 of embedded circles in the plane (also called 
 {\em Seifert circles}) and an embedded interval 
 (denoted by $J$).
 We call $J$ the {\em Seifert interval}.
 $J$ has the same endpoints $x,y$ as $D^{\circ}$.
\item Fill the Seifert circles with the disjoint disks 
 lying above $\R^2\times\{ 0\}$, and take a band 
 $J\times [0,1]$ lying below $\R^2\times\{0\}$ and 
 meeting $\R^2\times\{0\}$ along $J\times\{0\}\approx J$.
\item Take the half-twisted band sums along the original 
 crossings (Figure~\ref{fig:band_sum}) of $D^{\circ}$.
 The resulting surface $\Sigma_{D^{\circ}}$ is the 
 canonical surface of $D^{\circ}$.
\end{enumerate}
The boundary of $\Sigma_{D^{\circ}}$ is the union of 
$D^{\circ}$ with $J\times\{1\}\cup\{x,y\}\times [0,1]$.
\end{defn}

\begin{defn}\label{def:genus_knotoid}
The {\em genus} $g(D^{\circ})$ of a knotoid diagram 
$D^{\circ}$ is defined to be the genus of the canonical 
surface $\Sigma_{D^{\circ}}$ of $D^{\circ}$.
\end{defn}

\section{The bridge-replacing move}\label{sect:main}
The main operation of this paper, 
the {\em bridge-replacing move}, 
is introduced in this section.
It is clear by definition that this operation does not 
change the representing knot types.
The observation given in Remark~\ref{rem:how_to_reduce_genus} 
is a key in proving that this operation does not increase, 
and sometimes certainly decreases, the genus of a knot diagram.

\begin{defn}\label{defn:bridge}
An {\em over-bridge} (resp.\ {\em under-bridge}) 
{\em of length} $k$ {\em of a knot diagram} $D$ is a 
consecutive sequence $B$ of $k$ over-crossing (resp.\ 
under-crossing) segments of $D$ (see the leftmost diagram 
in Figure~\ref{fig:8_20_and_8_20oid}).
\end{defn}

\begin{notn}\label{notn}
Let $D$ be a diagram of a knot $K$ and
$B$ be an over- or under-bridge of $D$.
We denote by $D^{\circ}_B$ the knotoid diagram obtained
by removing (the interior of) $B$ from $D$.
We denote by $D^{\bullet}_B$ the virtual knot diagram obtained
from $D$ by turning each crossing along $B$ into a virtual crossing.
See Figure~\ref{fig:8_20_and_8_20oid}.
\end{notn}

\begin{prop}\label{prop:eq}
Under Notation~\ref{notn}, we have
\[
g(D^{\circ}_B)=g(D^\bullet_B).
\]
\end{prop}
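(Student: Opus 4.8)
The plan is to evaluate each of the two genera by a counting formula and then to reduce the asserted equality to a single combinatorial identity relating the relevant counts. Throughout write $n$ for the number of crossings of $D$ and $k$ for the length of the bridge $B$, and set $m:=n-k$. Turning the $k$ crossings along $B$ into virtual crossings deletes exactly the $k$ chords carrying an endpoint on $B$, so the Gauss diagram $G_{D^\bullet_B}$ has $m$ chords; likewise, erasing the interior of $B$ destroys those same $k$ crossings, so the knotoid diagram $D^\circ_B$ has $m$ classical crossings. Thus the two objects carry the same system of $m$ real crossings, and only their ambient curve differs (a circle for $D^\bullet_B$, an interval for $D^\circ_B$).

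First I would evaluate $g(D^\bullet_B)$ directly from Lemma~\ref{lem:canonical_genus_Gauss_diagram}: with $s_G$ denoting the number of cycles of $G_{D^\bullet_B}$ (Definition~\ref{def:cycle_in_Gauss_diagram}), one has
\[
g(D^\bullet_B)=\frac{m-s_G+1}{2}.
\]
Next I would compute $g(D^\circ_B)$ from Turaev's surface $\Sigma_{D^\circ_B}$ of Definition~\ref{defn:surface_knotoid} by a handle count. Smoothing the $m$ crossings of $D^\circ_B$ produces, say, $s^\circ$ Seifert circles together with the single Seifert interval $J$; filling the circles gives $s^\circ$ disks, the band $J\times[0,1]$ contributes one further disk, and the half-twisted bands at the $m$ crossings are $m$ one-handles joining these pieces. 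Hence $\chi(\Sigma_{D^\circ_B})=(s^\circ+1)-m$, and since $\Sigma_{D^\circ_B}$ is connected, orientable and has exactly one boundary component, this yields
\[
g(D^\circ_B)=\frac{m-s^\circ}{2}.
\]

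Comparing the two formulas, the proposition is equivalent to the combinatorial identity $s_G=s^\circ+1$, and establishing this is the heart of the matter. I would argue it by tracing the smoothing components. Exactly as in the proof of Proposition~\ref{prop:surfaces_homeo}, where the cycles of a Gauss diagram are identified with the Seifert circles of the diagram, the $s_G$ cycles of $G_{D^\bullet_B}$ correspond to the components obtained by smoothing all real crossings of $D^\bullet_B$ while letting the strands pass transversally through the virtual crossings (at which no smoothing occurs). The bridge arc of $D^\bullet_B$ carries no real crossing and runs from one endpoint of $J$ to the other, so it closes the Seifert interval $J$ of $D^\circ_B$ into a single additional circle while leaving the remaining $s^\circ$ circles untouched; therefore the smoothing of $D^\bullet_B$ has $s^\circ+1$ components, that is, $s_G=s^\circ+1$.

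Substituting $s_G=s^\circ+1$ into the formula for $g(D^\bullet_B)$ gives $g(D^\bullet_B)=\frac{m-s^\circ}{2}=g(D^\circ_B)$, as required. The step I expect to require the most care is the identity $s_G=s^\circ+1$: one must verify that the bridge arc genuinely joins the two ends of one and the same Seifert interval (so that the component count rises by exactly one) and that its passage through the virtual crossings neither merges nor splits any of the other smoothing circles.
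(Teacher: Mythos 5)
Your proof is correct and follows essentially the same route as the paper's: both arguments count the disks, bands and boundary components of the two canonical surfaces, compare Euler characteristics, and reduce the claim to matching the number of cycles of $G_{D^\bullet_B}$ with the Seifert-circle count of $D^{\circ}_B$. If anything you are more careful than the paper, which asserts that the number $\delta$ of glued disks equals the number of Seifert circles of $D^{\circ}_B$ (literally an off-by-one, since the band $J\times[0,1]$ contributes the extra disk), whereas you make the correct identity $s_G=s^{\circ}+1$ explicit and justify it by observing that the bridge arc closes the Seifert interval $J$ into one additional circle.
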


\begin{proof}
If we denote by $\delta$ the number of disks 
that were glued in the final step (the step (iv) in 
Definition~\ref{def:canonical_surface_virtual}) of the 
construction of $\Sigma_{D^\bullet_B}$ and by $\gamma$ 
the number of chords in the Gauss diagram $G_{D^\bullet_B}$,
then the Euler characteristic $\chi(\Sigma_{D^\bullet_B})$ 
of $D^\bullet_B$ is equal to $\delta-\gamma$.
We easily see that
$\delta$ equals the number of Seifert circles
in Turaev's construction of the Seifert surface $\Sigma_{D^{\circ}_B}$
and that $\gamma$ equals the number of crossings of $D^{\circ}_B$.
Thus it is clear that $\Sigma_{D^{\circ}_B}$
has the same Euler characteristic as $\Sigma_{D^\bullet_B}$ and
hence is homeomorphic to $\Sigma_{D^\bullet_B}$.
\end{proof}

\begin{rem}\label{rem:eq}
Given a knotoid diagram $D^{\circ}$, we can construct
a virtual knot diagram by connecting the two endpoints
of $D^{\circ}$ with an arbitrary
path along which only virtual crossings occur;
then it is a natural idea to define
the Gauss diagram of $D^{\circ}$ to be the
Gauss diagram of such a virtual knot diagram.
According to Proposition~\ref{prop:eq}, it turns out that 
we can alternatively define the genus of a knotoid
diagram (Definition~\ref{def:genus_knotoid})
to be the genus of its Gauss diagram
(just as Stoimenow, Tchernov and Vdovina \cite{STV}
define the genus of a virtual knot diagram).
\end{rem}

\begin{prop}\label{prop:ineq}
Under Notation~\ref{notn}, we have
\[
g(D)\ge g(D^{\circ}_B).
\]
\end{prop}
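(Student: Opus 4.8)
The plan is to transport the statement to the level of Gauss diagrams, where both genera are computed by the single formula of Lemma~\ref{lem:canonical_genus_Gauss_diagram}, and then to reduce the inequality to a purely combinatorial statement about how the number of cycles changes under chord deletion. First I would record the two genus expressions. Writing $n$ for the number of crossings of $D$, Proposition~\ref{prop:surfaces_homeo} together with Lemma~\ref{lem:canonical_genus_Gauss_diagram} gives
\[
g(D)=g(\Sigma_{G_D})=\frac{n-s_{G_D}+1}{2}.
\]
On the other side, Proposition~\ref{prop:eq} gives $g(D^{\circ}_B)=g(D^{\bullet}_B)=g(\Sigma_{G_{D^{\bullet}_B}})$, and since $D^{\bullet}_B$ is obtained from $D$ by turning the $k$ crossings of $B$ into virtual ones, its Gauss diagram $G_{D^{\bullet}_B}$ is exactly $G_D$ with the $k$ chords coming from $B$ deleted. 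Hence $G_{D^{\bullet}_B}$ has $n-k$ chords and
\[
g(D^{\circ}_B)=\frac{(n-k)-s_{G_{D^{\bullet}_B}}+1}{2}.
\]

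Subtracting, the desired inequality $g(D)\ge g(D^{\circ}_B)$ is seen to be equivalent to
\[
s_{G_D}-s_{G_{D^{\bullet}_B}}\le k,
\]
i.e.\ deleting the $k$ bridge chords decreases the number of cycles by at most $k$. I would prove this by removing the chords one at a time and showing that each single deletion changes the number of cycles by at most one.

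The key lemma is therefore: \emph{for any chord $c$ of a Gauss diagram $G$, the numbers of cycles of $G$ and of $G\setminus c$ differ by at most one}, and this is where I expect the main difficulty to lie. The cleanest route is topological. By the proof of Lemma~\ref{lem:canonical_genus_Gauss_diagram} the cycles of $G$ correspond to the boundary components of $\Sigma'_G=A\cup\bigcup_c B_c$ other than the outer circle $S^1\times\{1\}$, so $\Sigma'_G$ has $s_G+1$ boundary components. Now $\Sigma'_G$ is obtained from $\Sigma'_{G\setminus c}$ by attaching the single band $B_c$, that is, by attaching one $1$-handle, and both surfaces are connected and oriented. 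Attaching a $1$-handle to an oriented surface changes the number of boundary components by exactly $\pm1$: it merges two boundary circles when the two feet of the band lie on different components, and splits one circle into two when they lie on the same component, the orientation-preserving gluing excluding the possibility of leaving the count unchanged. Consequently $|s_G-s_{G\setminus c}|=1$, which is the lemma.

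Iterating this over the $k$ bridge chords yields $s_{G_D}-s_{G_{D^{\bullet}_B}}\le k$ and hence the proposition. The point that demands care is the $\pm1$ claim for $1$-handle attachment: one must verify that the gluings prescribed in Definition~\ref{def:canonical_surface_virtual}(iii) are exactly the orientation-preserving ones—they are chosen precisely so that $\Sigma'_G$ is oriented—so that the non-orientable case, in which a twisted band leaves the boundary count unchanged, never occurs, and likewise that each intermediate diagram $G\setminus c$ still carries an oriented $\Sigma'$ inherited by restriction. An alternative, entirely combinatorial proof of the key lemma can be given by realizing the cycles as the orbits of the permutation $\rho=(\text{next endpoint along the circle})\circ(\text{chord involution})$ acting on the chord endpoints and tracking how deleting the two endpoints of $c$ recombines the one or two orbits passing through them; I would nonetheless keep the topological argument as the primary one, since it makes the $\pm1$ transparent.
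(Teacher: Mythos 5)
Your proof is correct, and its overall architecture is exactly the paper's: reduce via Proposition~\ref{prop:eq} to comparing the Gauss diagrams $G_D$ and $G_{D^\bullet_B}$, observe that the latter is the former with the $k$ bridge chords deleted, and control the genus change one chord at a time. Where you diverge is in how the single-chord lemma is established. The paper (Lemma~\ref{lem:one_by_one}) argues combinatorially, writing the cycle(s) through $c$ explicitly as $\{w_1,a,b,w_2,b,a\}$ or as a pair $\{w_3,a,b\}$, $\{w_4,b,a\}$ and checking that deletion splits the former into $\{w_1\}$, $\{w_2\}$ or merges the latter into $\{w_3,w_4\}$; you instead identify cycles with boundary components of $\Sigma'_G$ and invoke the fact that attaching an orientation-preserving $1$-handle to an oriented surface changes the boundary count by exactly $\pm1$. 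Both are valid, and your topological route makes the $\pm1$ essentially automatic (you correctly flag that orientability is what rules out the count staying fixed). The one thing the paper's more explicit version buys that yours does not is the finer dichotomy --- genus drops precisely when the two feet of $c$ lie on the \emph{same} cycle (Case (i) of Figure~\ref{fig:configuration}) --- which is not needed for Proposition~\ref{prop:ineq} but is reused verbatim to prove Propositions~\ref{prop:less} and \ref{prop:bypass}; your boundary-component argument does in fact yield the same dichotomy (split vs.\ merge according to whether the feet share a boundary circle), so nothing is lost, it is just left implicit.
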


\begin{proof}
By Proposition~\ref{prop:eq}, it suffices
to compare the genera of the Gauss diagrams
$G_D$ and of $G_{D^\bullet_B}$. Note that
$G_{D^\bullet_B}$ is obtained by removing from $G_D$
the chords corresponding to the crossings along $B$.
In Lemma~\ref{lem:one_by_one}, we will prove that the
genus of any Gauss diagram does not increase after a
removal of a chord.
This implies the result.
\end{proof}

\begin{lem}\label{lem:one_by_one}
Let $G$ be any Gauss diagram and $G'$ be the Gauss diagram
obtained by removing from $G$ a chord $c$.
Then we have $g(G)\ge g(G')$.

In more detail, if $c$ appears twice in a single cycle of 
$G$ (as Case (i) in Figure~\ref{fig:configuration}), then 
we have $g(G)=g(G')+1$ and hence $g(G)>g(G')$.
Otherwise we have $g(G)=g(G')$.
\end{lem}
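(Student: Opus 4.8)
The plan is to turn the whole statement into a count of cycles. Writing $n$ for the number of chords of $G$ and $s_G,\,s_{G'}$ for the numbers of cycles of $G$ and $G'$, formula~\eqref{eq:canonical_genus_Gauss_diagram} of Lemma~\ref{lem:canonical_genus_Gauss_diagram} gives
\[
g(G)-g(G')=\frac{(n-s_G+1)-((n-1)-s_{G'}+1)}{2}=\frac{s_{G'}-s_G+1}{2}.
\]
Thus it suffices to prove that deleting the chord $c$ changes the number of cycles by exactly one, and to pin down the sign: $s_{G'}=s_G+1$ precisely in Case~(i), and $s_{G'}=s_G-1$ otherwise. Note that once this is shown, the two cases yield $g(G)-g(G')=1$ and $g(G)-g(G')=0$ respectively, so the inequality $g(G)\ge g(G')$ and the sharper dichotomy both follow at once, with no separate non-negativity argument needed.

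First I would record how $c$ meets the cycles. Reading off Definition~\ref{def:cycle_in_Gauss_diagram}, every chord is traversed in step~(i) exactly twice in the totality of cycles, namely once starting from each of its two endpoints; these two traversals lie either in one and the same cycle (this is Case~(i) of Figure~\ref{fig:configuration}) or in two distinct cycles, which is exactly the dichotomy in the statement. Translating to the surface $\Sigma'_G$ of Definition~\ref{def:canonical_surface_virtual}, whose boundary circles other than the outer one $S^1\times\{1\}$ are precisely the cycles of $G$, the two traversals of $c$ run along the two long sides of the band $B_c$. Hence $c$ is traversed twice in a single cycle if and only if both sides of $B_c$ lie on one and the same boundary circle of $\Sigma'_G$, and $c$ meets two distinct cycles if and only if the two sides lie on two distinct boundary circles.

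I would then finish by deleting the band. Passing from $G$ to $G'$ is the same as passing from $\Sigma'_G$ to $\Sigma'_{G'}$, obtained by removing the band $B_c$, which is again connected and oriented and has Euler characteristic one larger. Because the two surfaces differ by a single band, a local inspection of the boundary shows that the number of boundary circles changes by exactly $\pm1$, the orientability of $\Sigma'_G$ being what excludes any M\"obius-type recombination. If both sides of $B_c$ lie on one circle of $\Sigma'_G$---the single-cycle case---then removing the band splits that circle into two, so $s_{G'}=s_G+1$ and $g(G)=g(G')+1$; if the two sides lie on two distinct circles---the two-cycle case---then removing the band merges them, so $s_{G'}=s_G-1$ and $g(G)=g(G')$. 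Substituting these into the displayed identity gives the lemma.

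The crux, and the step I expect to cost the most care, is exactly this sign determination: that the single-cycle configuration is the one for which the boundary \emph{splits} (so that the genus drops), rather than merges. This is where orientability is essential, and I would nail it down by checking the local models drawn in Figure~\ref{fig:configuration} directly. I would also dispose separately of the degenerate configurations---where the two endpoints of $c$ are adjacent on the circle, or where $c$ already forms a length-two cycle on its own---since there the rerouting of the affected cycle has to be described by hand; the resulting count is nonetheless the same.
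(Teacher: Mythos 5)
Your proposal is correct and follows essentially the same route as the paper: both reduce the lemma via formula~\eqref{eq:canonical_genus_Gauss_diagram} to showing that deleting $c$ changes the number of cycles by exactly $+1$ in the single-cycle case and $-1$ in the two-cycle case. The only difference is cosmetic --- the paper verifies the splitting $\{w_1,a,b,w_2,b,a\}\rightsquigarrow\{w_1\},\{w_2\}$ and the merging $\{w_3,a,b\},\{w_4,b,a\}\rightsquigarrow\{w_3,w_4\}$ directly on the cyclic words, whereas you carry out the same bookkeeping topologically as a band removal on $\Sigma'_G$, using orientability to fix the sign.
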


\begin{proof}
In order to compare the genera of $G$ and of $G'$, we need only 
to know the change of the numbers of their chords and cycles 
in constructing $G'$ from $G$ by removing the chord $c$ (see 
the formula \eqref{eq:canonical_genus_Gauss_diagram} 
in Lemma~\ref{lem:canonical_genus_Gauss_diagram} 
and Remark~\ref{rem:how_to_reduce_genus}).

In the case where $c$ appears twice in a single cycle 
(say $\alpha$) of $G$, this $\alpha$ is of the form
\[
 \alpha =\{ w_1,a,b,w_2,b,a\}
\]
for $a,b\in\partial c$ and some words $w_1$, $w_2$ 
in the endpoints.
Let $f(w_i)$ (resp.\ $l(w_i)$) be the first (resp.\ last)
endpoint contained in $w_i$.
Then the points $a$, $b$, $f(w_i)$ and $l(w_i)$ ($i=1,2$) 
are located on the circle as Case (i) in 
Figure~\ref{fig:configuration}.
\begin{figure}
\includegraphics{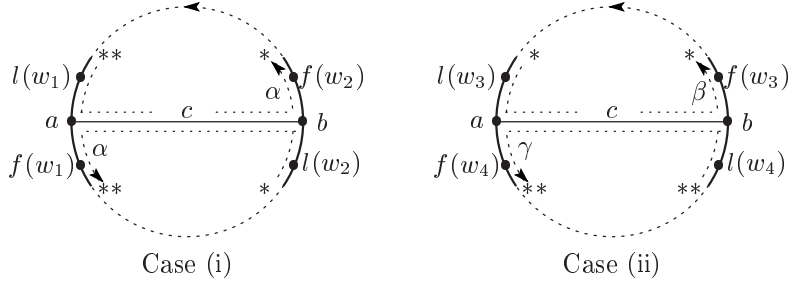}
\caption{The thickened curves are the parts of the circle of the Gauss diagram $G$.}
\label{fig:configuration}
\end{figure}
After the chord $c$ is removed (then the points 
$a$ and $b$ are also removed), the cycle $\alpha$ splits 
into two cycles $\{ w_1\}$ and $\{ w_2\}$, because $f(w_1)$ 
(resp.\ $f(w_2)$) appears just after $l(w_1)$ (resp.\ $l(w_2)$).
Thus the number of cycles increases by one. 
Since the number of chords decreases by one 
after $c$ is removed, 
we have $g(G)=g(G')+1$ by the formula 
\eqref{eq:canonical_genus_Gauss_diagram}.

Next consider the case where such a cycle as above does 
not exist. 
Then there are exactly two cycles 
(say $\beta =\{ w_3,a,b\}$ and $\gamma =\{ w_4,b,a\}$) 
in each of which $c$ appears exactly once.
Then $\beta$ and $\gamma$ are 
located as Case (ii) in Figure~\ref{fig:configuration}.
Thus after the removal of $c$, the cycle $\beta$ is 
unified with $\gamma$ by a band-sum along $c$, and the 
result is a single cycle $\{ w_3,w_4\}$.
Thus the number of cycles decreases by one.
Thus the formula \eqref{eq:canonical_genus_Gauss_diagram} 
implies that $g(G)=g(G')$.
\end{proof}

\begin{rem}
In \cite[\S1]{turaev}, it reads that 
``\textit{The study of knotoid diagrams leads to an elementary 
but possibly useful improvement of the standard Seifert estimate 
from above for the genus of knot.}''
However, it has not been rigorously proven in \cite{turaev}
that a knotoid diagram always gives
an estimate not worse
than ``the usual Seifert estimate'' with respect to the genus.
Propositions~\ref{prop:ineq} and \ref{prop:eq},
which deduce the inequality $g(D)\ge g(D^\circ_B)$, implement it. 
Notice that introducing the notion of the Gauss diagram
of a knotoid has been the main key
in the proof (see Remark~\ref{rem:eq}).
\end{rem}

\begin{rem}
We do not need to consider the removal of any 
``sub-bridge'' of $B$ since, by Lemma~\ref{lem:one_by_one}, 
the removal of the whole $B$ always gives a better estimate 
of the genus than any removal of a sub-bridge.
\end{rem}

Now we introduce the bridge-replacing move for a knot diagram
which does not increase (Theorem~\ref{thm:main}), and
in some cases certainly decrease (Proposition~\ref{prop:less}),
the genus of the diagram (cf.\ {\cite[\S2.5]{turaev}}).

\begin{defn}[the bridge-replacing move]\label{defn:daikoku_move}
Let $D$ be a diagram of a knot $K$ with
an over-bridge (resp.\ under-bridge) $B$.
Then we define an operation,
called \textit{the bridge-replacing move for $(D,B)$},
which replaces the over-bridge (resp.\ under-bridge) $B$
into another over-bridge (resp.\ under-bridge) $\tilde{B}$, as follows.

Take an orientation on the diagram $D$
and consider the (oriented) knotoid diagram $D^{\circ}_B$ by
removing $B$ from $D$ (see Figure~\ref{fig:8_20_and_8_20oid}).
Let $J$ be the oriented Seifert interval of $D^{\circ}_B$,
obtained by the smoothing process (the step (ii) in
Definition~\ref{defn:surface_knotoid}),
from the leg $P$ to
the head $Q$ (see Figure~\ref{fig:8_20oid_smoothed}) in $D^{\circ}_B$.

Now the new bridge $\tilde{B}$ from $Q$ to $P$ is
constructed just along $J$ so that it goes on the right (resp.\ left)
side of $J$ if at all possible,
and is allowed to overpass (resp.\ underpass)
only the arcs of which $J$ consists.
Thus the union $D^{\circ}_B\amalg\tilde{B}$
provides a new knot diagram, which we denote by $\tilde{D}_B$ (see Figure~\ref{fig:8_20_DM}).
Clearly $\tilde{D}_B$ represents the same knot type $K$ as $D$ does.
\end{defn}

\begin{rem}
In Definition~\ref{defn:daikoku_move}, in fact, 
it is not so important which side of $J$ the new bridge goes on 
(although it is essential that the new bridge does not 
cause crossings outside $J$). 
We have adopted the convention here so that we can treat 
over-bridges and under-bridges symmetrically. 
This systematic treatment will be convenient 
in Algorithm~\ref{alg}.
\end{rem}

\begin{figure}
\includegraphics{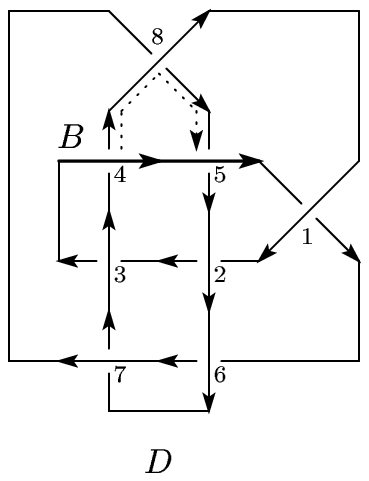}\quad
\includegraphics{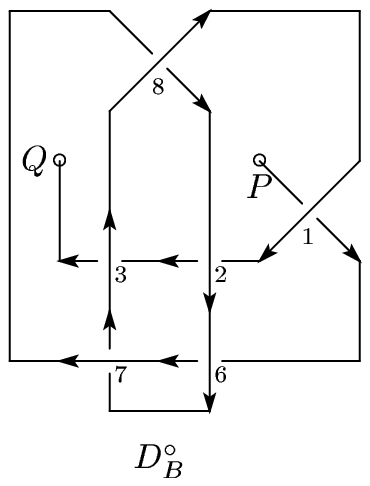}\quad
\includegraphics{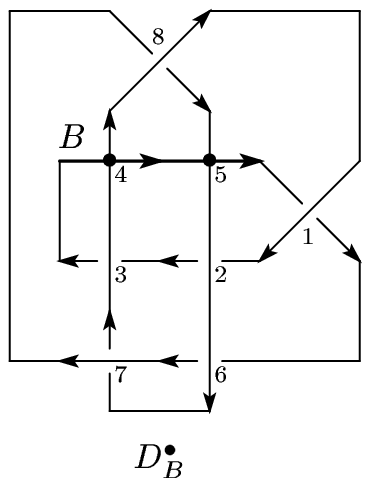}
\caption{A diagram $D$ of $8_{20}$, a knotoid diagram 
 $D^{\circ}_B$ obtained by removing an over-bridge 
 $B=(45)$ from $D$, and a diagram $D^{\bullet}_B$ of the 
 virtual knot obtained by turning the crossings $4$, $5$ 
 along $B$ into virtual crossings.
 The dotted arrow in $D$ is a {\em bypass} for $B$ 
 (see Proposition~\ref{prop:bypass}).}
\label{fig:8_20_and_8_20oid}
\end{figure}

\begin{figure}
\includegraphics{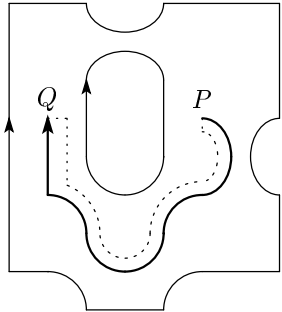}
\caption{The diagram obtained by smoothing all the 
 crossings of $D^{\circ}_B$.
 The thickened curve is the Seifert interval $J$.
 The dotted curve will be a guide for the new bridge 
 $\tilde{B}$ in the new diagram $\tilde{D}$.}
\label{fig:8_20oid_smoothed}
\end{figure}

\begin{figure}
\includegraphics{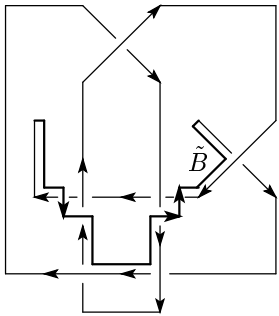}
\caption{The diagram $\tilde{D}$ of $8_{20}$ obtained by 
 replacing $B$ with $\tilde{B}$}
\label{fig:8_20_DM}
\end{figure}

Regarding the above bridge-replacing move,
our main claim is the following.

\begin{thm}\label{thm:main}
Let $D$ be a knot diagram with an over- or under-bridge $B$ and
$\tilde{D}_B$ be the knot diagram obtained by the bridge-replacing
move for $(D,B)$. Then we have
\[
g(D)\ge g(\tilde{D}_B).
\]
\end{thm}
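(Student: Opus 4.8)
The plan is to combine Proposition~\ref{prop:ineq}, which already yields $g(D)\ge g(D^{\circ}_B)$, with the separate claim that the bridge-replacing move preserves the genus, i.e.\ that $g(\tilde{D}_B)=g(D^{\circ}_B)$. Granting the latter the theorem is immediate, since $g(D)\ge g(D^{\circ}_B)=g(\tilde{D}_B)$. So the entire content is to compute $g(\tilde{D}_B)$ and to identify it with the knotoid genus $g(D^{\circ}_B)$.

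To compute $g(\tilde{D}_B)$ I would run Seifert's algorithm on $\tilde{D}_B$ and apply Lemma~\ref{lem:canonical_genus_knot_diagram}. Write $m$ for the number of crossings of $D^{\circ}_B$ and $s$ for the number of its Seifert circles; reading off the Turaev surface of Definition~\ref{defn:surface_knotoid} (namely $s$ Seifert disks, one band over the interval $J$, and $m$ half-twisted bands) gives $\chi(\Sigma_{D^{\circ}_B})=(s+1)-m$, and hence $g(D^{\circ}_B)=(m-s)/2$. By the construction in Definition~\ref{defn:daikoku_move} the crossings of $\tilde{D}_B$ are exactly the $m$ crossings of $D^{\circ}_B$ together with $p$ new crossings, where $p$ is the number of times the new bridge $\tilde{B}$ passes over (resp.\ under) the arcs of $J$; thus $\tilde{D}_B$ has $m+p$ crossings.

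The key step is to count the Seifert circles of $\tilde{D}_B$: I claim there are exactly $s+p+1$ of them. Since $\tilde{B}$ is permitted to cross only the arcs of $J$ and nothing else, smoothing the $m$ original crossings reproduces, undisturbed, the $s$ Seifert circles of $D^{\circ}_B$ together with the interval $J$, with which alone $\tilde{B}$ interacts. It then remains to smooth the closed curve $J\cup\tilde{B}$ at its $p$ self-crossings. Here the orientation convention of Definition~\ref{defn:daikoku_move} is decisive: $J$ runs from $P$ to $Q$ while $\tilde{B}$ runs from $Q$ back to $P$ alongside it, so at each of the $p$ crossings the two strands are anti-parallel and the forced oriented smoothing is the ``turn-back'' resolution. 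Tracing the resulting arcs along the band-like neighbourhood of $J$ (as in Figure~\ref{fig:8_20oid_smoothed}) shows that these $p$ turn-backs cut $J\cup\tilde{B}$ into a chain of exactly $p+1$ circles, so that $\tilde{D}_B$ has $s+p+1$ Seifert circles in all.

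Feeding these counts into Lemma~\ref{lem:canonical_genus_knot_diagram} gives
\[
g(\tilde{D}_B)=\frac{(m+p)-(s+p+1)+1}{2}=\frac{m-s}{2}=g(D^{\circ}_B),
\]
equivalently $\chi(\Sigma_{\tilde{D}_B})=(s+1)-m=\chi(\Sigma_{D^{\circ}_B})$; as both surfaces are connected with a single boundary component, they are in fact homeomorphic. I expect the one genuinely delicate point to be the Seifert-circle count, and specifically the verification that the anti-parallel smoothing of $J\cup\tilde{B}$ yields precisely $p+1$ circles while $\tilde{B}$ leaves the original $s$ circles intact; both facts hinge on $\tilde{B}$ being built on one side of $J$ and crossing only $J$. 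Everything else is bookkeeping with the Euler characteristic.
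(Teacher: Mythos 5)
Your proposal is correct and follows essentially the same route as the paper: the paper likewise reduces the theorem to Proposition~\ref{prop:ineq} together with the identity $g(\tilde{D}_B)=g(D^{\circ}_B)$ (its Lemma~\ref{lem:main}), and proves the latter by exactly your count --- the new bridge of length $k$ adds $k$ crossings while the anti-parallel smoothing along $J$ replaces the Seifert interval by $k+1$ new Seifert circles, leaving the genus unchanged. Your explicit Euler-characteristic bookkeeping and your identification of the delicate step (that $\tilde{B}$ crosses only $J$ and the turn-back resolutions cut $J\cup\tilde{B}$ into $k+1$ circles) match the paper's argument.
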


To prove Theorem~\ref{thm:main},
we just need to combine Lemma~\ref{lem:main} below with
Propositions~\ref{prop:eq} and \ref{prop:ineq}.

\begin{lem}\label{lem:main}
Let $D$ be a knot diagram with an over- or under-bridge $B$.
Let $\tilde{D}_B$ be the knot diagram obtained by the bridge-replacing
move for $(D,B)$ and $D^{\circ}_B$ be the knotoid diagram
obtained by removing from $D$ the interior of $B$. Then we have
\[
g(\tilde{D}_B)=g(D^{\circ}_B).
\]
\end{lem}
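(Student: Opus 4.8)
The plan is to evaluate each side through its crossing and Seifert-circle data and reduce the equality to a single combinatorial count. Let $n^\circ$ be the number of crossings of $D^\circ_B$ and let $\sigma$ be the number of Seifert circles produced by smoothing $D^\circ_B$ in step (ii) of Definition~\ref{defn:surface_knotoid}, \emph{not} counting the Seifert interval $J$. Reading off Turaev's construction, $\Sigma_{D^\circ_B}$ is assembled from $\sigma$ disks, the single band $J\times[0,1]$, and one half-twisted band per crossing, so $\chi(\Sigma_{D^\circ_B})=\sigma+1-n^\circ$; since this surface is connected with exactly one boundary component, $g(D^\circ_B)=(n^\circ-\sigma)/2$. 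On the other hand $\tilde D_B$ is an honest knot diagram, so Lemma~\ref{lem:canonical_genus_knot_diagram} gives $g(\tilde D_B)=(\tilde n-\tilde s+1)/2$ in terms of its numbers $\tilde n$ of crossings and $\tilde s$ of Seifert circles. Hence the whole statement reduces to the identity $\tilde n-\tilde s+1=n^\circ-\sigma$.

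First I would compute the two counts for $\tilde D_B=D^\circ_B\cup\tilde B$. By Definition~\ref{defn:daikoku_move} the new bridge $\tilde B$ meets the rest of the diagram only along $J$; writing $m$ for the number of such crossings, every crossing of $\tilde D_B$ is either one of the $n^\circ$ crossings of $D^\circ_B$ or one of these $m$ new ones, so $\tilde n=n^\circ+m$. To find $\tilde s$ I would smooth $\tilde D_B$ in two stages. Smoothing the $n^\circ$ original crossings is identical to smoothing $D^\circ_B$ and so recreates the same $\sigma$ Seifert circles, except that the interval $J$ is now closed up by $\tilde B$ into a single immersed loop $L=J\cup\tilde B$ carrying the $m$ remaining crossings. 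Thus $\tilde s=\sigma+c$, where $c$ is the number of circles obtained by smoothing $L$, and the required identity becomes simply $c=m+1$.

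The identity $c=m+1$ is the heart of the matter and the step I expect to give the most trouble, since for a general immersed loop the number of Seifert circles is not controlled by the number of crossings; here it is pinned down by the conventions of Definition~\ref{defn:daikoku_move}. Because $\tilde B$ is a one-sided pushoff of $J$ oriented oppositely to it (from $Q$ to $P$), two facts hold: traversing $\tilde B$ from $Q$ to $P$ meets the crossings $X_1,\dots,X_m$ in the reverse of the order in which $J$ meets them from $P$ to $Q$; and at every crossing the oriented smoothing reconnects the arc of $J$ arriving from the $P$-side to the arc of $\tilde B$ leaving toward the $P$-side, and the arc of $\tilde B$ arriving from the $Q$-side to the arc of $J$ leaving toward the $Q$-side. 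I would verify this local reconnection rule once from a single oriented picture (checking that it is the same for both senses in which $\tilde B$ may cross $J$), and then trace the resulting successor relation on the arcs $a_0,\dots,a_m$ of $J$ and $b_0,\dots,b_m$ of $\tilde B$: it pairs each $a_i$ with $b_{m-i}$ into one circle, giving exactly $m+1$ circles. Substituting $\tilde n=n^\circ+m$ and $\tilde s=\sigma+m+1$ then yields $\tilde n-\tilde s+1=n^\circ-\sigma$, whence $g(\tilde D_B)=(n^\circ-\sigma)/2=g(D^\circ_B)$, as claimed.
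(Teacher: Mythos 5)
Your proof is correct and follows essentially the same route as the paper's: both reduce the claim to the observation that Seifert's algorithm on $\tilde D_B$ reproduces the Seifert circles of $D^{\circ}_B$ verbatim, while the loop $J\cup\tilde B$, carrying the $m$ new crossings, smooths into exactly $m+1$ circles, after which the genus formulas cancel. The only difference is one of detail: the paper simply asserts the $(k+1)$-circle count ``considering the orientations of $J$ and of $\tilde B$,'' whereas you justify it via the reverse-order property of the crossings along $\tilde B$ and the explicit pairing $a_i\leftrightarrow b_{m-i}$ — a worthwhile elaboration, since the count would fail for an arbitrary arc crossing $J$ the same number of times.
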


\begin{proof}
We use the notations in Notation~\ref{notn} and
Definition~\ref{defn:daikoku_move}.

Since the new bridge $\tilde{B}$ passes only
the arcs which configures the Seifert interval $J$,
the Seifert algorithm for the new knot diagram
$\tilde{D}_B=D^{\circ}_B\amalg\tilde{B}$ provides the same result
that the Seifert algorithm for $D^{\circ}_B$ does,
except in the part with which $J$ is involved.

Assume that the new bridge $\tilde{B}$ has length $k$,
that is, $\tilde{B}$ passes $k$ times
the arcs which configures $J$.
Then, in applying the Seifert algorithm for the knot diagram 
$\tilde{D}_B$, considering the orientations of $J$ and 
of $\tilde{B}$, we have $(k+1)$ new Seifert circles 
(instead of $J$) in the part with which $J$ is involved.
This implies that
$g(\tilde{D}_B)$ is equal to $g(D^{\circ}_B)$.
\end{proof}

Next we study the condition for the bridge-replacing move 
to certainly decrease the genus.

An {\em over-bridge} (resp.\ {\em under-bridge}) of a 
Gauss diagram $G$ of length $k$ is a consecutive sequence of 
endpoints $v_0,\dots ,v_{k+1}$ on the oriented circle of $G$ 
such that $v_1,\dots ,v_k$ are the initial (resp.\ terminal) 
points of the chords of $G$.

\begin{prop}\label{prop:less}
Let $G$ be a Gauss diagram with an over- or under-bridge
$v_0,\dots ,v_{k+1}$, and let $G'$ be the Gauss diagram 
obtained by removing all the chords $c$ with $v_i\in \partial c$ 
for some $1\le i\le k$.
Then $g(G)>g(G')$ if and only if there is at least one cycle
$\alpha$ of $G$ of the form
\[
 \alpha =\{\dots v_iv_{i+1}\dots v_jv_{j+1}\dots\}
\]
for some $0\le i\ne j\le k$.
\end{prop}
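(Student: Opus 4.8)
The plan is to restate the condition in terms of cycles and then remove the $k$ bridge chords one at a time, applying Lemma~\ref{lem:one_by_one} at each stage. For $0\le i\le k$ write $s_i$ for the circle step $v_i\to v_{i+1}$ appearing in the cyclic sequences of Definition~\ref{def:cycle_in_Gauss_diagram}. First I would observe that in the whole collection of cycles each endpoint is reached along a chord exactly once (each chord being traversed once in each direction), and that the step taken immediately after such an arrival is the circle step leaving that endpoint; hence each $s_i$ lies in exactly one cycle of $G$, which I denote $\alpha_i$. A cycle of the form $\{\dots v_iv_{i+1}\dots v_jv_{j+1}\dots\}$ is precisely a cycle containing two of the steps $s_i$, so the stated condition is equivalent to saying that $\alpha_0,\dots,\alpha_k$ are \emph{not} all distinct, i.e.\ that the number $t$ of distinct cycles among $\alpha_0,\dots,\alpha_k$ satisfies $t\le k$. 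Since iterating Lemma~\ref{lem:one_by_one} already gives $g(G)\ge g(G')$, it remains to prove that $g(G)=g(G')$ if and only if $t=k+1$, i.e.\ if and only if $s_0,\dots,s_k$ lie in pairwise distinct cycles.

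Next I would set up the inductive removal. Removing the chord $c_1$ at $v_1$ deletes $v_1$ (and its far endpoint), so $v_0$ and $v_2$ become adjacent and $v_0,v_2,\dots,v_{k+1}$ is again a bridge, now of length $k-1$, whose circle steps are the merged step $s_0'=(v_0\to v_2)$ together with $s_2,\dots,s_k$. By Lemma~\ref{lem:one_by_one} this removal changes the genus precisely according to whether $c_1$ occurs twice in one cycle: tracing the two directions of $c_1$ shows that the traversal arriving at $v_1$ is followed by $s_1$, while the traversal leaving $v_1$ is preceded by $s_0$; hence $c_1$ lies in a single cycle (Case (i), genus drops by one) exactly when $\alpha_0=\alpha_1$, and in two distinct cycles (Case (ii), genus unchanged) exactly when $\alpha_0\ne\alpha_1$.

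The heart of the argument is the genus-preserving case $\alpha_0\ne\alpha_1$. Here Lemma~\ref{lem:one_by_one} unifies $\alpha_0$ and $\alpha_1$ into a single cycle $\alpha^{*}$ of $G_1:=G\setminus c_1$ and leaves all other cycles intact. I would check that the merged step $s_0'$ lies in $\alpha^{*}$, while each $s_j$ ($2\le j\le k$) stays in its old cycle, relabelled to $\alpha^{*}$ if that cycle was $\alpha_0$ or $\alpha_1$. Consequently the quantity $t$ attached to the new, shorter bridge equals the old $t$ minus one, because exactly two distinct labels $\alpha_0,\alpha_1$ are amalgamated into the single new label $\alpha^{*}$ (and $\alpha^{*}$, being a newly merged cycle, cannot coincide with any of the untouched cycles). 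I expect this redistribution bookkeeping---verifying that $s_0'$ falls into the amalgamated cycle and that no spurious coincidence of cycles occurs---to be the main obstacle; the one genuinely delicate point is the possibility that a far endpoint of a bridge chord sits at $v_0$ or $v_{k+1}$, which I would dispose of by noting that the counting of steps and of their cycles is unaffected by such coincidences.

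Finally I would conclude by induction on $k$ using the contrapositive, which conveniently needs only the Case (ii) computation above. If $t=k+1$ then $s_0,\dots,s_k$ lie in distinct cycles, so $\alpha_0\ne\alpha_1$, the first removal is Case (ii) and preserves the genus, and the new bridge again has all its steps in distinct cycles (indeed $t$ drops from $k+1$ to $k$, which equals its new number of steps); iterating, no removal ever drops the genus and $g(G)=g(G')$. Conversely, if $g(G)=g(G')$ then no removal drops the genus, so every one of the $k$ removals is Case (ii); hence $t$ decreases by exactly one at each step, from its initial value down to the value $1$ of the final one-step bridge, forcing the initial value to be $k+1$. This proves $g(G)=g(G')\iff t=k+1$, equivalently $g(G)>g(G')\iff t\le k$, which is the assertion. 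The under-bridge case is entirely analogous, by reversing the orientations.
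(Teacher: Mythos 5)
Your proof is correct and follows essentially the same route as the paper: both reduce the statement to Lemma~\ref{lem:one_by_one} and decide Case (i) versus Case (ii) for a bridge chord according to whether the two circle-segments adjacent to its bridge endpoint lie in the same cycle. Your version is in fact more careful than the paper's terse argument, since by tracking the invariant $t$ through the one-by-one removals you correctly handle the situation where the coincident segments are non-adjacent (e.g.\ $\alpha_0=\alpha_2\neq\alpha_1$), in which no chord of $G$ itself is in Case (i) and the genus only drops at a later stage of the removal.
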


\begin{proof}
This is a consequence of Lemma~\ref{lem:one_by_one}.
If such a cycle $\alpha$ as above exists, then Case (i) 
in Figure~\ref{fig:configuration} happens for 
at least one chord $c$ having its endpoint between 
$v_iv_{i+1}$ and $v_jv_{j+1}$, and hence the genus 
certainly decreases. 
Conversely, suppose any two of the portions $v_iv_{i+1}$
($0\le i\le k$) do not belong to the same cycle in $G$.
Then any chord $c$ with $v_i\in \partial c$ is placed 
as Case (ii) in Figure~\ref{fig:configuration}. 
Thus any removal of them does not decrease the genus.
\end{proof}

In the case of knot diagrams, the condition in 
Proposition~\ref{prop:less} can be stated as follows.

\begin{prop}\label{prop:bypass}
Let $D$ be a diagram of a knot $K$ with an over- or under-bridge $B$.
Then the bridge-replacing move for $B$ certainly decreases the genus 
if and only if there exists a collection of oriented arcs in $D$ 
which constitute, after the smoothing process 
(step (ii) in Seifert's algorithm, Definition~\ref{def:Seifert_algorithm}), 
a (well-oriented) path connecting two crossings included in $B$.
We call such a collection of arcs a {\em bypass}.
\end{prop}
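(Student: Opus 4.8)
The plan is to reduce the statement to its Gauss-diagram counterpart, Proposition~\ref{prop:less}, and then to translate the combinatorial condition appearing there into the geometric condition on $D$. First I would collect the genus equalities already at hand. By Proposition~\ref{prop:surfaces_homeo}, $g(D)=g(\Sigma_D)=g(\Sigma_{G_D})=g(G_D)$; on the other hand, Lemma~\ref{lem:main} together with Proposition~\ref{prop:eq} gives $g(\tilde{D}_B)=g(D^{\circ}_B)=g(D^{\bullet}_B)=g(G_{D^{\bullet}_B})$. As noted in the proof of Proposition~\ref{prop:ineq}, the Gauss diagram $G_{D^{\bullet}_B}$ is obtained from $G\coloneqq G_D$ by deleting precisely the chords of the crossings along $B$, whose overpass (resp.\ underpass) endpoints occupy consecutive positions $v_1,\dots,v_k$ on the circle of $G$. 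Thus $v_0,\dots,v_{k+1}$ is an over- (resp.\ under-) bridge of $G$ and $G'\coloneqq G_{D^{\bullet}_B}$ is exactly the Gauss diagram appearing in Proposition~\ref{prop:less}. Hence the bridge-replacing move certainly decreases the genus if and only if $g(G)>g(G')$, which by Proposition~\ref{prop:less} happens if and only if some cycle of $G$ has the form $\alpha=\{\dots v_iv_{i+1}\dots v_jv_{j+1}\dots\}$ with $0\le i\ne j\le k$. It then remains to identify the existence of such a cycle with the existence of a bypass.

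For this I would invoke the dictionary underlying Proposition~\ref{prop:surfaces_homeo}: the cycles of $G$ (Definition~\ref{def:cycle_in_Gauss_diagram}) are exactly the Seifert circles of $D$, a circle-step between two consecutive endpoints corresponding to travelling along an arc of $D$ between two crossings, and a chord-step corresponding to passing through a crossing via the oriented smoothing of Figure~\ref{fig:smoothing}. Under this dictionary the portion $v_iv_{i+1}$, being a circle-step between two consecutive bridge endpoints, corresponds to traversing the over-arc of $B$ joining the two associated crossings; and because the oriented smoothing interchanges the over- and under-strands at each crossing of $B$, a Seifert circle running along such an over-arc enters one crossing of $B$ along its under-strand and leaves the other along its under-strand. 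Consequently a cycle $\alpha$ containing two portions $v_iv_{i+1}$ and $v_jv_{j+1}$ is a Seifert circle that runs along $B$ twice, and the piece of it lying between the two passages is a well-oriented path, built from arcs of $D$ that avoid the interior of $B$ and join two crossings of $B$: this is the desired bypass. Conversely, given a bypass, each of its two ends lies on the under-strand at a crossing of $B$, and the oriented smoothing routes that end onto an over-arc of $B$; following these over-arcs closes the bypass up into a Seifert circle traversing two distinct over-arcs of $B$, that is, into a cycle of the form $\alpha$.

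The main obstacle lies in this final translation, and it is precisely where the hypothesis that the path be \emph{well-oriented} is used. One must check, by tracking orientations through the oriented smoothing at the two ends of the bypass, that the two over-arcs of $B$ obtained on closing up are genuinely distinct, so that indeed $i\ne j$; and one must deal with the boundary indices $i=0$ or $j=k$, for which $v_0$ or $v_{k+1}$ is not an overpass endpoint of a chord of $B$, by selecting among the arcs of the cycle lying outside $B$ a segment both of whose ends abut crossings genuinely belonging to $B$. Once the orientations have been matched consistently this is routine, but it does demand care, since an incautious choice of the connecting segment would yield a loop based at a single crossing (corresponding to Case (ii) rather than Case (i) in Figure~\ref{fig:configuration}) instead of a path between two distinct crossings of $B$.
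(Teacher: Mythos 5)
Your overall strategy coincides with the paper's: Proposition~\ref{prop:bypass} is presented there as a reformulation of Proposition~\ref{prop:less} in the language of knot diagrams, with no separate proof, so the real content is exactly the translation you sketch. Your reduction (via Proposition~\ref{prop:surfaces_homeo}, Lemma~\ref{lem:main} and Proposition~\ref{prop:eq}) to comparing $g(G_D)$ with $g(G_{D^\bullet_B})$, the identification of $G_{D^\bullet_B}$ with the $G'$ of Proposition~\ref{prop:less}, and the dictionary between cycles and Seifert circles are all correct.

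The gap is in your final paragraph, and it is not merely a matter of unfinished routine checking. A ``loop based at a single crossing'' $c$ of $B$ --- a well-oriented path leaving $c$ along its outgoing under-strand and returning along its incoming under-strand --- is \emph{not} Case~(ii) of Figure~\ref{fig:configuration}: it is precisely the situation where the chord of $c$ is traversed twice by one cycle, i.e.\ Case~(i) of Lemma~\ref{lem:one_by_one}, corresponding in Proposition~\ref{prop:less} to a cycle containing the two \emph{adjacent} arcs $v_iv_{i+1}$ and $v_{i+1}v_{i+2}$ (so $j=i+1\ne i$). Such a loop is therefore a legitimate witness that the genus drops, and when $j=i+1$ it is the \emph{only} segment of the witnessing cycle whose two ends lie at crossings of $B$; no path between two genuinely distinct crossings of $B$ need exist then (a bridge of length $1$ already shows this). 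So your stated plan to avoid the loop and insist on two distinct crossings would break the ``only if'' direction: the bypass must be allowed to begin and end at the same crossing. Conversely, the configuration that genuinely must be excluded is the one you flag correctly but leave unresolved: a well-oriented path from $c_{b+1}$ to $c_b$ closes up through the single over-arc $v_bv_{b+1}$, giving $i=j$, and such a path \emph{always} exists (the complement of that over-arc in its own Seifert circle), so it must never be counted as a bypass --- otherwise the ``if'' direction is false for every bridge of length at least $2$. Distinguishing these two superficially similar degenerate paths by tracking the smoothing at the two endpoints is the entire substance of the proposition, and with the Case~(i)/(ii) labels interchanged as in your parenthesis, carrying out your plan would lead to the wrong dichotomy.
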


For example in the leftmost diagram of 
Figure~\ref{fig:8_20_and_8_20oid}, 
the existence of the bypass (dotted) for 
the over-bridge $B$ (thickened) ensures that 
the bridge-replacing move for $(D,B)$ decrease the 
genus of the diagram.
In fact, the diagram shown in Figure~\ref{fig:8_20_DM} 
obtained from that in Figure~\ref{fig:8_20_and_8_20oid} 
by the bridge-replacing move for $B$ detects the genus $2$ of $8_{20}$.

\section{An example}\label{s:ex}
\begin{figure}
\includegraphics[scale=0.22]{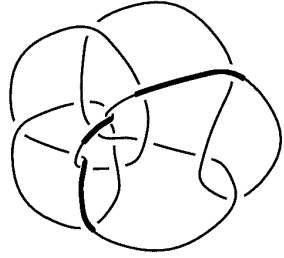}
\caption{The knot $16_{686716}$.
 This diagram has Dowker-Thistlethwaite code (DT code) 
 4 10 -26 -22 -18 2 20 -26 -32 -28 14 30 -6 -12 -8 24}
\label{fig:16-5}
\end{figure}

The knot $16_{686716}$, which is the $306917^{\rm th}$
non-alternating $16$ crossing knot in the Hoste-Thistlethwaite table \cite{h-t}, 
is referred to in Stoimenow's recent paper \cite[\S10.3]{stoimenow}
as a knot whose (canonical) 
genus has not been determined (to be whether $2$ or $3$) yet.
Here we demonstrate an approach to this problem using bridge-replacing moves.
Indeed its genus turns out to be $2$ (see Remark~\ref{rem:g=2}).

\begin{figure}
\includegraphics[scale=0.22]{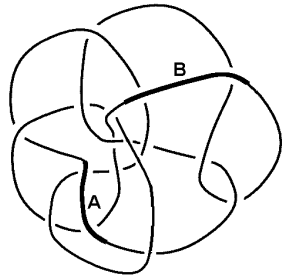}
\caption{A diagram with genus $5$}
\label{fig:17-5}
\end{figure}

In the knot diagram of $16_{686716}$ shown in Figure~\ref{fig:16-5}, 
that is drawn by 
Knotscape \cite{h-t} and has genus $5$,
we can find three over-bridges (thickened) each of which
has a favorable bypass (see Proposition~\ref{prop:bypass}).
In fact we can check that
the bridge-replacing move for one of the three bridges
produces a new diagram with genus $4$.
In this case, however, such a move for one bridge destroys
the bypasses of the other two bridges and apparently
we cannot perform further bridge-replacing moves.
To avoid this we precook the diagram into
the diagram shown in Figure~\ref{fig:17-5}.
Then we can perform for the diagram in Figure~\ref{fig:17-5}
bridge-replacing moves twice successively,
first with respect to the bridge $A$ and second with respect to $B$,
so that we obtain a diagram with genus $3$, but with $20$ crossings, of
the knot $16_{686716}$
(the number of the crossings of this diagram can be pared down to $18$ by the second Reidemeister move).

\begin{figure}
\includegraphics[scale=0.3]{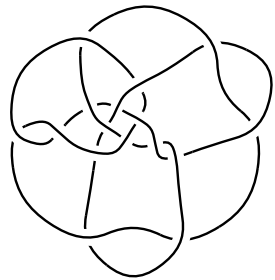}
\caption{A minimal crossing diagram with genus $3$ of $16_{686716}$.
Its DT code is -12 26 22 -14 28 -2 -20 30 -24 8 -32 -16 4 10 18 -6}
\label{fig:16-3}
\end{figure}

Now we can interpret the bridge-replacing move
in terms of Gauss codes, 
of which we give a detailed account in Appendix~\ref{appendix}.
By using the interpretation,
we develop a small Python program
which, for an inputted Gauss code,
performs all possible bridge-replacing moves
(and reduces excessive crossings in pairs
by the second Reidemeister moves).
With the aid of the program (and with some heuristic procedures),
we have found the diagram shown in Figure~\ref{fig:16-3} of the knot
$16_{686716}$,
that has genus $3$
and minimal (that is, $16$) crossings.

\begin{rem}\label{rem:g=2}
Inspired by Figures~\ref{fig:17-5} and \ref{fig:16-3}, Mikami Hirasawa found
Seifert surfaces of genus $2$ for the knot
$16_{686716}$ (by probing for ``compression disks'').
Figure~\ref{fig:minimal} depicts one of such (possibly non-canonical) Seifert
surfaces with genus $2$.
By an estimate via polynomial invariants, these surfaces turn out to attain the genus $2$ of the knot\footnote{This should be deduced also from a calculation of the knot Floer homology.}.
\begin{figure}
\includegraphics[scale=0.25]{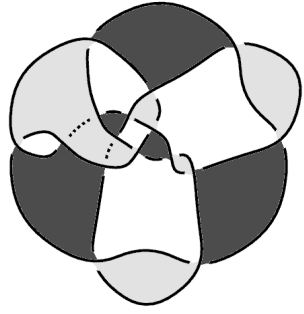}
\caption{A Seifert surface for $16_{686716}$ with genus $2$}
\label{fig:minimal}
\end{figure}
We do not know, however, if the canonical genus of
the knot can go down to $2$.
Namely, we do not know whether any of these 
Seifert surfaces for $16_{686716}$ with genus $2$ can be realized 
as a canonical one.
\end{rem}

\appendix
\section{The bridge-replacing move in terms of Gauss codes}\label{appendix}
We give an interpretation of 
the bridge-replacing move, introduced in \S\ref{sect:main},
in terms of {\em Gauss codes}.
We will follow the convention of \cite{kauffman} for Gauss codes.

\begin{defn}\label{def:Gauss_code}
A {\em unit} is a word of length three of the form 
``$\alpha\beta\gamma$,'' where
\begin{itemize}
\item ``$\alpha$'' is either the characters {\tt O} or {\tt U} 
 (which indicate ``over'' or ``under'' information),
\item ``$\beta$'' is a natural number (the label of a crossing),
\item ``$\gamma$'' is the sign $+$ or $-$ (the sign of a crossing).
\end{itemize}
A {\em Gauss code of length $n$} is a cyclic sequence of 
$2n$ units $X_1\dots X_{2n}$ such that:
\begin{enumerate}[(i)]
\item It includes $n$ distinct natural numbers
 and each number appears exactly twice.
\item If the number {\tt k} (which appears twice) appears 
 in some unit including the character {\tt U} (resp.\ {\tt O}), 
 then {\tt k} must appear in the other unit including {\tt O} 
 (resp.\ {\tt U}).
\item For each number {\tt k}, the two units including {\tt k} 
 have the same sign.
\end{enumerate}
\end{defn}

To an oriented knot diagram $D=D_K$ with $n$ crossings, 
 we can associate the Gauss code $X_D=X_1\dots X_{2n}$ 
 of length $n$ as follows:
label all the crossings as $1,2,\dots ,n$, and go along $D$ 
 according to the orientation (starting from an arbitrary point $b$).
We will meet each crossing exactly twice before we come back to $b$.
The $i$-th unit $X_i$ consists of
\begin{itemize}
\item the character {\tt O} or {\tt U} depending on whether 
 we overpass or underpass the $i$-th crossing respectively,
\item the natural number {\tt k} taken from the label of the 
 $i$-th crossing, and
\item the sign $+$ or $-$ depending on whether the $i$-th 
 crossing is positive or negative respectively.
\end{itemize}

The following definition is motivated by the notion of cycles 
 in Gauss diagrams (Definition~\ref{def:cycle_in_Gauss_diagram}).

\begin{defn}\label{def:cycle_in_code}
Let $C=X_1\dots X_{2n}$ be a Gauss code.
A {\em cycle} in $C$ is a cyclic sequence of units in $C$ 
obtained in the following steps: starting from some unit $X_i$,
\begin{enumerate}[(i)]
\item
 record the unit $X_j$ that contains the same natural number 
 as $X_i$,
\item
 record the next unit $X_{j+1}$ to $X_j$,
\item
 from $X_j$, repeat (i) and (ii) above until $X_i$ appears 
 again in the step (ii).
\end{enumerate}
We will denote a cycle as a cyclic sequence 
$X_{i_1}X_{i_2}\dots X_{i_{2k}}$ of units which appear in the 
above steps (i) and (ii).
\end{defn}

\begin{ex}
The Gauss code of the knot diagram of $3_1$ shown in 
Figure~\ref{fig:trefoil} is
\begin{center}
 $C=$ {\tt O1-U2-O3-U1-O2-U3-}.
\end{center}
This code has two cycles
{\tt O1-U1-O2-U2-O3-U3-} and
{\tt U1-O1-U2-O2-U3-O3-}
(compare them with cycles of $G$ given in 
Example~\ref{ex:cycle_Gauss_diagram}).
\end{ex}

\begin{rem}
A Gauss code for a virtual knot diagram can also be defined 
in the same way as explained after Definition~\ref{def:Gauss_code}, 
except that no units are assigned to virtual crossings.
\end{rem}

\begin{defn}\label{def:overbridge_GaussCode}
An {\em over-bridge} (resp.\ {\em under-bridge}) $B$ in a 
Gauss code $C=X_1\dots X_{2n}$ is a subsequence
$B=X_iX_{i+1}\dots X_j$ with all $X_p$ ($i\le p\le j$) including 
the character {\tt O} (resp.\ {\tt U}).
\end{defn}

Now we describe the bridge-replacing move, only for an over-bridge.
For an under-bridge, we need only to
interchange the letters \texttt{O} and \texttt{U} 
appearing in Algorithm~\ref{alg}.

\begin{algor}[the bridge-replacing move in terms of Gauss codes]\label{alg}
Given a Gauss code $C$ and an over-bridge $B$ in $C$,
we obtain the Gauss code $\tilde{C}$ by the following algorithm.
Let $n$ be the maximal natural number used in the code $C$.

\begin{enumerate}[(i)]
\item
Starting from the unit just before $B$,
search $C$ (cyclically) leftward and
find the first unit $X$ which does not contain
any number appearing in $B$.
\item
Remove all the unit which contain the numbers appearing in $B$,
so that we obtain the new code $C'$.
Note that $C'$ contains the unit $X$.
\item
Let $c$ be the cycle of $C'$
starting from the unit $X$.
\item
Starting from $X$, search $c$ (cyclically) leftward and
find all the sequences as \texttt{Om+Um+} or as \texttt{Um-Om-}
in $c$, where \texttt{m} is an arbitrary natural number;
record these natural numbers $a_1,a_2,\ldots,a_k$ in order (leftward from $X$).
\item
For $C'$,
insert the sequence
\texttt{O(n+1)-O(n+2)+O(n+3)-O(n+4)+$\mathtt{\cdots}$O(n+2k)+}
just after the unit $X$ in $C'$.
Furthermore,
\begin{itemize}
\item
if \texttt{O$\mathtt{a_j}$+U$\mathtt{a_j}$+} appears in $c$:\\
insert \texttt{U(n+2j-1)-} just after \texttt{U$\mathtt{a_j}$+} and
insert \texttt{U(n+2j)+} just before \texttt{O$\mathtt{a_j}$+}.
\item
if \texttt{U$\mathtt{a_j}$-O$\mathtt{a_j}$-} appears in $c$:\\
insert \texttt{U(n+2j-1)-} just after \texttt{O$\mathtt{a_j}$-} and
insert \texttt{U(n+2j)+} just before \texttt{U$\mathtt{a_j}$-}.
\end{itemize}
Then we denote the resultant code by $\tilde{C}$.
\end{enumerate}
\end{algor}

We can easily check that 
if $D$ is a knot diagram and its Gauss code is $C$, 
then $\tilde{C}$ obtained by Algorithm~\ref{alg} 
is nothing but the Gauss code of the diagram obtained by 
performing the bridge-replacing move for $(D,B)$, 
abusing the notation $B$ also for the bridge in the Gauss diagram 
corresponding to $B\subset C$.

\begin{ex}
Let $D$ be the knot diagram of $8_{20}$
as shown in Figure~\ref{fig:8_20_and_8_20oid}.
Its Gauss code $C=C_D$ is
\begin{center}
 {\tt O1+U2-U3+O4+O5-U1+U6-O7-U8-U5-O2-O6-U7-O3+U4+O8-}.
\end{center}
Consider an over-bridge $B=$ {\tt O4+O5-}.
\begin{enumerate}[(i)]
\item
 We find the unit $X=$ {\tt U3+} just before $B$, which does not
 contain neither {\tt 4} nor {\tt 5}.
\item
 Removing all the units containing {\tt 4} or {\tt 5},
 we obtain a code
	\begin{center}
	$C'=$ {\tt O1+U2-U3+U1+U6-O7-U8-O2-O6-U7-O3+O8-}
	\end{center}
 which indeed contains $X=$ {\tt U3+}.
 Here $C'$ corresponds to the knotoid diagram
 $D^{\circ}$ in Figure~\ref{fig:8_20_and_8_20oid}.
 The crossing labeled {\tt 3} is the first one
 which we meet after $Q$ along the knotoid diagram $D^{\circ}$.
\item
 The cycle $c$ of $C'$ starting from $X=$ {\tt U3+} is
	\begin{center}
	$c=$ {\tt U3+U1+O1+U2-O2-O6-U6-O7-O3+}.
	\end{center}
 This corresponds to the guide of the new bridge
 $\tilde{B}$ (Figure~\ref{fig:8_20oid_smoothed}).
\item
 In the cycle $c$, we find sequences
	\begin{center}
	{\tt U2-O2-, O3+U3+}
	\end{center}
 (notice that the codes are regarded as cyclic sequences).
 We put $\mathtt{a_1=3}$, $\mathtt{a_2=2}$
 (leftward order from $X$).
\item
 Since $n=8$ and $k=2$, we insert the sequence
	\begin{center}
	{\tt O9-O10+O11-O12+}
	\end{center}
 just after $X=$ {\tt U3+}.
 Moreover
	\begin{itemize}
	\item
	 since {\tt O$\mathtt{a_1}$+U$\mathtt{a_1}$+}
	 ($\mathtt{a_1=3}$) appears in $c$,
	 insert {\tt U9-} just after {\tt U3+} and
	 insert {\tt U10+} just before {\tt O3+},
	\end{itemize}
	\begin{itemize}
	\item
	 since {\tt U$\mathtt{a_2}$-O$\mathtt{a_2}$-}
	 ($\mathtt{a_2=2}$) appears in $c$,
	 insert {\tt U11-} just after {\tt O2-} and
	 insert {\tt U12+} just before {\tt U2-}.
	\end{itemize}
 These units correspond to the new crossings
 which are produced after the bridge $B$ is replaced
 by the new bridge $\tilde{B}$.
\end{enumerate}
In this way we obtain the new code
\begin{center}
 {\tt O1+U12+U2-U3+U9-O9-O10+O11-O12+U1+U6-O7-U8-O2-U11-O6-U7-U10+O3+O8-}
\end{center}
which represents the diagram (Figure~\ref{fig:8_20_DM}) 
obtained by performing the bridge-replacing move to $D$.
\end{ex}

\section*{Acknowledgments}
The authors would like to thank Kiyoshi Sasaki (Horrifunny Inc.)\ for his support
in coding the Python program used in \S\ref{s:ex} and for suggestions in writing Appendix~\ref{appendix}.
They are also grateful to Mikami Hirasawa for useful discussions.
KS is partially supported by the Grant-in-Aid for Research Activity Start-up, JSPS, Japan. 
MT is partially supported by the Grant-in-Aid for Scientific Research (C), JSPS, Japan.

\end{document}